\newtheorem{thm}{Theorem}[section]
\newtheorem{prop}[thm]{Proposition}
\newtheorem{lem}[thm]{Lemma}
\theoremstyle{definition}
\newtheorem{defi}[thm]{Definition}
\newtheorem{rem}[thm]{Remark}
\def\w{{\bf w}}
\def\T{{\mathcal T}}
\def\B{{\mathcal B}}
\def\Z{{\mathbb Z}}
\def\scaleFigure{0.7}
\title{The tree structure in staircase tableaux}
\author{Jean-Christophe Aval}
\address{LaBRI, Université de Bordeaux, 351 cours de la Libération, 33405 Talence, France}
\author{Adrien Boussicault}
\address{LaBRI, Université de Bordeaux, 351 cours de la Libération, 33405 Talence, France}
\thanks{The first two authors are supported by the ANR (PSYCO project -- ANR-11-JS02-001)}
\author{Sandrine Dasse-Hartaut}
\address{LIAFA, Université Paris Diderot - Paris 7, Case 7014 75205 Paris Cedex 13, France}
\thanks{The third  author is supported by the  ANR (IComb project)}
\date{\today}
\begin{document}
\maketitle

\begin{abstract}
Staircase tableaux are combinatorial objects which appear as key tools in the study
of the PASEP physical model.
The aim of this work is to show how the discovery of a tree structure in staircase tableaux
is a significant feature to derive properties on these objects.
\end{abstract}

\section*{Introduction}

The {\em Partially Asymmetric Simple Exclusion Process} (PASEP) is a physical model 
in which $N$ sites on a one-dimensional lattice are either empty
or occupied by a single particle. These particles may hop to the
left or to the right with fixed probabilities, which defines
a Markov chain on the $2^N$ states of the model.
The explicit description of the stationary probability of the PASEP
was obtained through the Matrix-Ansatz \cite{derrida}.
Since then, the links between specializations of this model 
and combinatorics have been the subject of an important 
research (see for example \cite{DS,CW1,CSSW}).
A great achievment is the description of the stationary distribution
of the most general PASEP model through statistics defined on 
combinatorial objects called staircase tableaux \cite{CW2}.

The objective of our work is to show how we can reveal an underlying 
tree structure in staircase tableaux and use it to obtain combinatorial properties.
In this paper, we focus on some specializations of the  {\em fugacity partition function} 
$Z_n(y;\alpha,\beta,\gamma,\delta;q)$, defined in the next section 
as a $y$-analogue of the classical partition function of staircase tableaux.

The present paper is divided into two sections.
Section \ref{sec:TLST} is devoted to the definition of labeled tree-like tableaux,
which are a new presentation of staircase tableaux,
and to the presentation of a tree structure and of an insertion algorithm on these objects.
Section \ref{sec:app} presents combinatorial applications of these tools.
We get:
\begin{itemize}
\item a new and natural proof of the formula for $Z_n(1; \alpha,\beta,\gamma,\delta ; 1)$;
\item a study of $Z_n(y;1,1,0,1;1)$; 
\item a bijective proof for the formula of $Z_n(1;1,1,1,0;0)$.
\end{itemize}

\section{Staircase tableaux and labeled tree-like tableaux}
\label{sec:TLST}

\subsection{Staircase tableaux}

\begin{defi}
A {\em staircase tableau} $\T$ of size $n$ is a Ferrers diagram of “staircase”
shape $(n, n - 1, \dots, 2, 1)$ such that boxes are either empty or labeled with $\alpha$, $\beta$, 
$\gamma$, or
$\delta$, and satisfying the following conditions:
\begin{itemize}
\item no box along the diagonal of $\T$ is empty;
\item all boxes in the same row and to the left of a $\beta$ or a $\delta$ are empty;
\item all boxes in the same column and above an $\alpha$ or a $\gamma$ are empty.
\end{itemize}
\end{defi}

Figure \ref{fig:ST} (left) presents a staircase tableau $\T$ of size 5.

\begin{figure}[ht]
    \centerline{
		\includegraphics[scale=\scaleFigure]{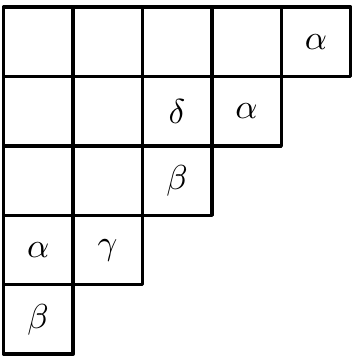}
		\hspace{1cm}
    	\includegraphics[scale=\scaleFigure]{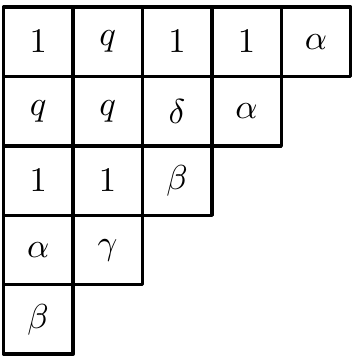}
	}
\caption{A staircase tableau and its weight labels}
\label{fig:ST}
\end{figure}

\begin{defi}\label{weight}
The {\em weight} $\w(\T)$ of a staircase tableau $\T$ is a monomial in
$\alpha, \beta, \gamma, \delta$ and $q$,  which we obtain as follows.
Every blank box of $\T$ is assigned a $q$ or a $1$, based on the label of the closest
labeled box to its right in the same row and the label of the closest labeled box
below it in the same column, such that:
\begin{itemize}
\item every blank box which sees a $\beta$ to its right gets a $1$;
\item every blank box which sees a $\delta$ to its right gets  a $q$;
\item every blank box which sees an $\alpha$ or $\gamma$ to its right,
  and an $\alpha$ or $\delta$ below it, gets  a $1$;
\item every blank box which sees an $\alpha$ or $\gamma$ to its right,
  and a $\beta$ or $\gamma$ below it, gets a $q$.
\end{itemize}
After filling all blank boxes,
we define $\w(\T)$
to be the product of all labels in all boxes.
\end{defi}

The weight of the staircase tableau $T$ on Figure~ \ref{fig:ST} is $\w(\T)=q^3 \alpha^3 \beta^2 \gamma \delta$.

There is a simple correspondence between the states of the PASEP 
and the diagonal labels of staircase tableaux: diagonal boxes
may be seen as sites of the model, and $\alpha$ and $\delta$ 
(resp. $\beta$ and $\gamma$)
diagonal labels correspond to occupied (resp. unoccupied) sites.
We shall use a variable $y$ 
to keep track of the number of particles
in each state.
To this way, we define $t(\T)$ to be the number of labels $\alpha$ or $\delta$ along the diagonal of $\T$.
For example  the tableau $\T$ in Figure~ \ref{fig:ST} has $t(\T)=2$. 
The {\em fugacity partition function}
of the PASEP is defined as 
$$Z_n(y;\alpha,\beta,\gamma,\delta;q)=
\sum_{\T\ {\rm of } \ {\rm size}\ n}\w(\T)y^{t(\T)}.
$$

\subsection{Labeled tree-like tableaux}

We shall now define another class of objects, called labeled tree-like tableaux.
They appear as a labeled version of tree-like tableaux (TLTs) defined in \cite{TLT}.
These tableaux are in bijection with staircase tableaux, and present two nice properties
inherited from TLTs: an underlying tree structure, and an insertion algorithm which provides 
a useful recursive presentation.

In a Ferrers diagram $D$, 
the {\em border edges} are the edges that stand at the end of rows or columns.
The number of border edges is clearly the half-perimeter of $D$.
For any box $c$ of $D$, we define $L_D(c)$ as the set of boxes placed in the same column and above $c$ in $D$,
and  $A_D(c)$ as the set of boxes placed in the same row and to the left of $c$ in $D$.
By a slight abuse, we shall use the same notations for any tableau $T$ of shape $D$.
These notions are illustrated at Figure~ \ref{fig:bordedges-legarm}.

\begin{figure}[ht]
    \centerline{
		$
		\begin{array}{c}
			\includegraphics[scale=\scaleFigure]{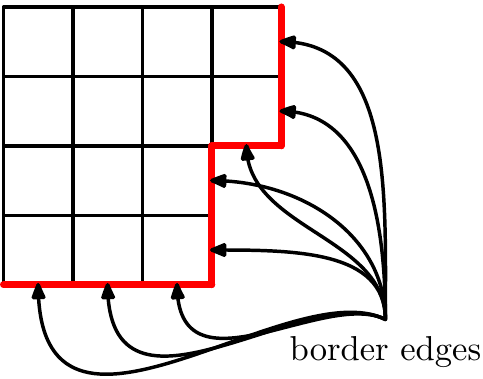}
		\end{array}
		$
			\hspace{1cm}
		$
		\begin{array}{c}
			\includegraphics[scale=\scaleFigure]{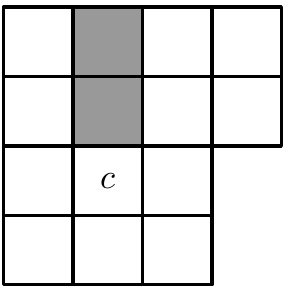}
		\end{array}
		$
		\hspace{1cm}
		$
		\begin{array}{c}
    		\includegraphics[scale=\scaleFigure]{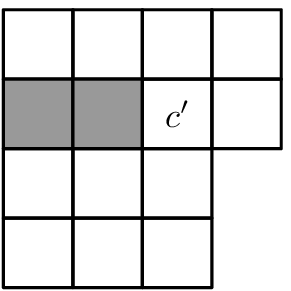}
		\end{array}
	$
	}
\caption{Border edges, $L_D(c)$ and $A_D(c')$}
\label{fig:bordedges-legarm}
\end{figure}

\begin{defi}\label{def:LTLT}
A {\em labeled tree-like tableau} (LTLT) $T$
of size $n$ is a Ferrers diagram of half-perimeter $n+1$
such that some boxes and all border edges are labeled with $1$, $\alpha$, $\beta$, 
$\gamma$, or
$\delta$, and satisfying the following conditions:
\begin{itemize}
\item the Northwestern-most box (root box) is labeled by $1$;
\item the labels in the first row and the first column are the only labels $1$; 
\item in each row and column, there is at least one labeled box;
\item for each box $c$ labeled by $\alpha$ or $\gamma$, all boxes in $L_T(c)$ are empty and at least one box in $A_T(c)$ is labeled;
\item for each box $c$ labeled by $\beta$ or $\delta$, all boxes in $A_T(c)$ are empty and at least one box in $L_T(c)$ is labeled.
\end{itemize}
\end{defi}

\begin{prop}\label{prop:bij}
For $n\ge2$, LTLTs of size $n$ are in bijection with staircase tableaux of size $n-1$.
\end{prop}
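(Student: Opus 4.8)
The plan is to exhibit two explicit, mutually inverse maps $\Phi$ and $\Psi$ between staircase tableaux of size $n-1$ and LTLTs of size $n$, organised around a single observation: both families obey the \emph{same} local blocking rules. In each, a cell labelled $\alpha$ or $\gamma$ forbids labels in the part of its column lying above it, while a cell labelled $\beta$ or $\delta$ forbids labels in the part of its row to its left. In a staircase tableau $\T$ of size $n-1$ the distinguished cells are the diagonal cells, which are never empty and are simultaneously the last cell of their row and of their column; in an LTLT the analogous role is played by the labelled border edges. So I would first read the diagonal of $\T$ as a word $w$ of length $n-1$ over $\{\alpha,\beta,\gamma,\delta\}$ and convert it into the south-east boundary, hence the Ferrers shape, of an LTLT, turning each column-blocking letter $\alpha,\gamma$ into a vertical boundary step and each row-blocking letter $\beta,\delta$ into a horizontal one. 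If $w$ has $a$ column-blocking and $b$ row-blocking letters, this yields a shape with $a+1$ rows and $b+1$ columns, the extra row and column being the root row and root column, of half-perimeter $a+b+2=n+1$, exactly as required.

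I would then define $\Phi(\T)$ by carving out this shape, transporting each diagonal label of $\T$ onto the border edge attached to the corresponding row or column, labelling the root box by $1$ and placing the forced $1$'s along the first row and first column, and finally copying the remaining non-diagonal labelled cells of $\T$ into the interior cells of the new diagram through the evident identification of coordinates. The inverse $\Psi$ undoes these steps: the border edges are read off to reconstruct both the diagonal word, hence the staircase shape of size $n-1$, and the diagonal labels, after which the interior labels are dropped back into place. Since each map acts on disjoint pieces of data, the boundary and diagonal on one hand and the interior on the other, the compositions $\Psi\circ\Phi$ and $\Phi\circ\Psi$ are visibly the identity once the two constructions are shown to land in the correct sets.

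The substance of the argument is then checking that $\Phi(\T)$ really is an LTLT and that $\Psi$ really returns a staircase tableau. For $\Phi$, the root is $1$ and the only $1$'s lie on the first row and column by construction; every row and column meets a labelled element because every diagonal cell of $\T$ is nonempty and is sent to a border edge of that row or column; and the emptiness of $L_T(c)$ for $\alpha,\gamma$ and of $A_T(c)$ for $\beta,\delta$ is inherited directly from the corresponding conditions in $\T$. The genuinely new clauses to verify are the connectivity requirements, namely that a cell labelled $\alpha$ or $\gamma$ has at least one labelled cell in $A_T(c)$, and a cell labelled $\beta$ or $\delta$ at least one in $L_T(c)$. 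I would derive these from the fact that in $\T$ the row, respectively column, of such a cell is continued by a nonempty diagonal cell, which under $\Phi$ becomes a labelled element lying in $A_T(c)$, respectively $L_T(c)$. For $\Psi$ the diagonal of the reconstructed staircase is full precisely because every border edge of the LTLT is labelled, and the staircase blocking rules are the image of the LTLT emptiness clauses.

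The step I expect to be the main obstacle is the boundary bookkeeping: reconciling the $n-1$ diagonal cells of $\T$ with the $n+1$ border edges of the LTLT, the surplus of two being absorbed by the root row and root column, and making the correspondence between border edges and their rows and columns precise enough that the connectivity clauses translate with no off-by-one or orientation slip. Fixing the conventions on the smallest cases $n=2$ and $n=3$ first, where one can list all staircase tableaux and all LTLTs by hand and watch them match, would be the safest way to pin down the general construction before writing it out in full.
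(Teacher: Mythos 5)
Your construction is the same bijection $\Lambda$ as the paper's: build the Ferrers shape from the diagonal word of $\T$ (equivalently, add a hook $(n+1,1^{n})$ to $\T$ and erase the row or column through each diagonal cell), push each diagonal label onto the corresponding border edge, insert the forced $1$'s in the first row and column, and carry the interior labels across. The shape and edge bookkeeping ($a+1$ rows, $b+1$ columns, half-perimeter $a+b+2=n+1$) is correct, and the overall strategy --- two explicit maps, visibly inverse, plus a verification that each lands in the right set --- matches the paper's proof.

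However, your justification of the connectivity clauses fails as written, and it is exactly the point the forced $1$'s are there to handle. For a box $c$ labelled $\alpha$ or $\gamma$ you must exhibit a labelled box in $A_T(c)$, i.e.\ \emph{to the left of} $c$ in its row; the diagonal cell that ``continues'' the row of $c$ sits at the right-hand end of that row (and its label ends up on the border edge there), so it lies on the wrong side and cannot serve as the witness --- symmetrically, for $\beta$ or $\delta$ the diagonal cell of the column is \emph{below} $c$, not in $L_T(c)$. The correct witness is either an already-labelled cell of $\T$ to the left of $c$ (resp.\ above $c$), or, when $c$ is the leftmost (resp.\ topmost) label, the cell of the added first column (resp.\ first row) in that line; this is precisely why the paper labels by $1$ the first-column cells that see an $\alpha$ or $\gamma$ to their right and the first-row cells that see a $\beta$ or $\delta$ below. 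Your ``forced $1$'s'' must be pinned down to exactly these positions --- both to make this clause hold and to make $\Psi$ well defined --- after which the rest of your argument goes through.
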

\begin{proof}
Let us describe a correspondence $\Lambda$ that sends a staircase tableau $\T$ of size $n-1$
to an LTLT $T$. It consists in the following steps (see Figure~ \ref{fig:L-bij}):
\begin{itemize}
\item we add to $\T$ a hook $(n+1,1^{n})$;
\item in this hook, we label by $1$: the root-cell, the border edges and the cells in the first row that see a $\beta$ or $\delta$ below, 
and the cells in the first column that see an $\alpha$ or $\gamma$ to their right;
\item for each label $\alpha$ or $\gamma$ (resp. $\beta$ or $\delta$) on the diagonal, we erase the corresponding column (resp. row) in the tableau,
which has to be empty, and put the label on the vertical (resp. horizontal) border edge to its left (resp. above it).
\end{itemize}
The result of these operations is an LTLT $\Lambda(\T)$ of size $n$.

It is straightforward to construct the inverse of $\Lambda$, since each operation may be reversed,
thus proving that $\Lambda$ is a bijection.
\end{proof}

\begin{figure}[ht]
    \centerline{
		$
		\begin{array}{c}
			\includegraphics[scale=\scaleFigure]{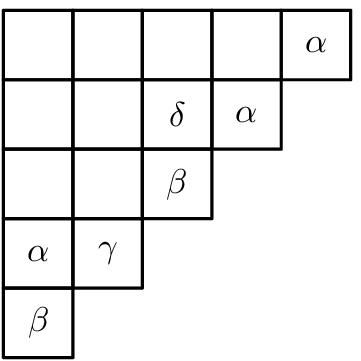}
		\end{array}
		\longleftrightarrow
		\begin{array}{c}
			\includegraphics[scale=\scaleFigure]{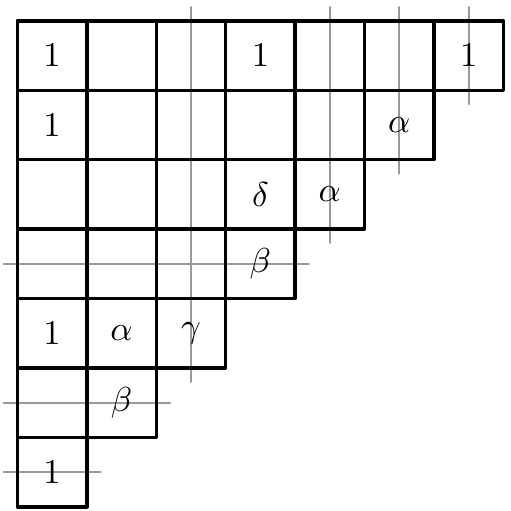}
		\end{array}
		\longleftrightarrow
		\begin{array}{c}
			\includegraphics[scale=\scaleFigure]{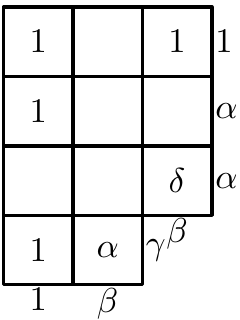}
		\end{array}
		$
	}
\caption{The bijection $\Lambda$}
\label{fig:L-bij}
\end{figure}

A nice feature of the notion of LTLT is its underlying tree structure. 
Let us consider an LTLT $T$ of size $n$.
We may see each label of $T$ as a node, 
and Definition~ \ref{def:LTLT} ensures that each node
(except the NE-most one, which appears as the root)
has either a node above it or to its left, which may be seen as its father.
We refer to Figure~ \ref{fig:tree} which illustrates this property. 
A {\em crossing} is a box $c$ such that 
\begin{itemize}
\item there is a label to the left and to the right of $c$;
\item there is a label above and below $c$.
\end{itemize}
In this way, we get a labeled binary tree with some additional information: crossings of edges.
If we forget the crossings, we have a binary tree in which each internal node and leaf is labeled.
As a consequence, any LTLT is endowed with an underlying binary tree structure,
such that the size of the LTLT is equal to the number of internal nodes in its underlying binary tree.

\begin{figure}[ht]
    \centerline{
		$
		\begin{array}{c}
			\includegraphics[scale=\scaleFigure]{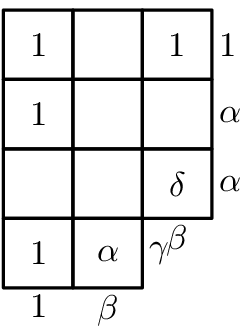}
		\end{array}
		$
		\hspace{1cm}
		$
		\begin{array}{c}
			\includegraphics[scale=\scaleFigure]{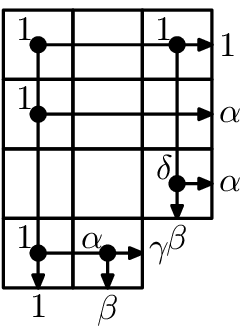}
		\end{array}
		$
		\hspace{1cm}
		$
		\begin{array}{c}
			\includegraphics[scale=\scaleFigure]{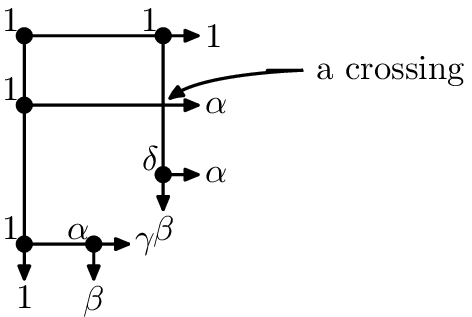}
		\end{array}
		$
	}
	\caption{The tree structure of staircase tableaux}
	\label{fig:tree}
\end{figure}

\subsection{The insertion algorithm}

Given an LTLT $T$ and a border edge $e$, a {\em compatible bi-label} is the choice of a (new) couple $(x,y)$ of labels such that
\begin{itemize}
\item if $e$ is in the first row of $T$ (thus vertical): $x=1$ and $y\in\{\beta,\delta\}$; 
\item if $e$ is in the first column of $T$ (thus horizontal): $y=1$ and $x\in\{\alpha,\gamma\}$; 
\item otherwise: $x\in\{\alpha,\gamma\}$ and $y\in\{\beta,\delta\}$.
\end{itemize}

\begin{defi}\label{def:col_add}
Given an LTLT $T$, a vertical border edge $e$ (thus at the end of a row $r$) with label $\ell$, 
and a compatible bi-label $(x,y)$,
the {\em column addition} in $T$ at edge $e$, with new label $(x,y)$ 
is defined as follows:
\begin{itemize}
\item we add a cell to $r$ and to all rows above it; 
\item since vertical and horizontal border edges on the right of $e$ are shifted horizontally, we shift also the corresponding labels;
\item we label the new box in row $r$ by $\ell$;
\item we label the two new vertical and horizontal border edges respectively by $x$ and $y$.
\end{itemize}
If the edge $e$ is horizontal, we define in the same way the {\em row addition}.
\end{defi}

\begin{figure}[ht]
    \centerline{
		$
		\begin{array}{c}
			\includegraphics[scale=\scaleFigure]{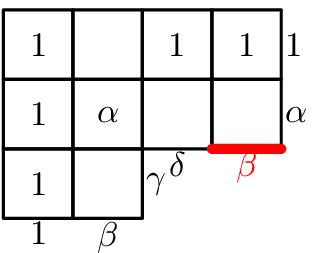}
		\end{array}
		\xrightarrow[(x,y) = (\gamma,\beta)]{}
		\begin{array}{c}
			\includegraphics[scale=\scaleFigure]{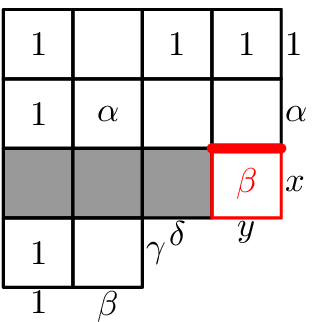}
		\end{array}
		\xrightarrow[\text{substitution}]{\text{after $x$ and $y$}}
		\begin{array}{c}
			\includegraphics[scale=\scaleFigure]{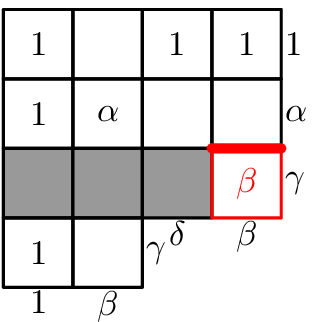}
		\end{array}
		$
	}
\caption{The row addition}
\label{fig:horisontal_insertion}
\end{figure}

Given two Ferrers diagrams $D_1\subseteq D_2$, we say that the set of cells $S=D_2-D_1$ (set-theoretic difference) 
is a {\em ribbon} if it is connected (with respect to adjacency) and contains no $2\times 2$ square. 
In this case we say that $S$ can be added to $D_1$, or that it can be removed from $D_2$.
For our purpose, we shall only consider the addition of a ribbon to an LTLT $T$
between a vertical border edge $e_1$ and an horizontal border edge $e_2$.
As in the row/column insertion, we observe that vertical (resp. horizontal) border edges are shifted
horizontally (resp. vertically), thus we shift also the corresponding labels.
Figure \ref{fig:ribbon_insertion} illustrates this operation.

\begin{defi}\label{def:spec}
Let $T$ be an LTLT. The {\em special box} of $T$ is the Northeast-most labeled box among those that occur at the bottom of a column.
This is well-defined since the bottom row of $T$ contains necessarily a labeled box.
\end{defi}

\begin{minipage}{0.6\columnwidth}
\begin{figure}[H]
    \centerline{
		$
		\begin{array}{c}
			\includegraphics[scale=\scaleFigure]{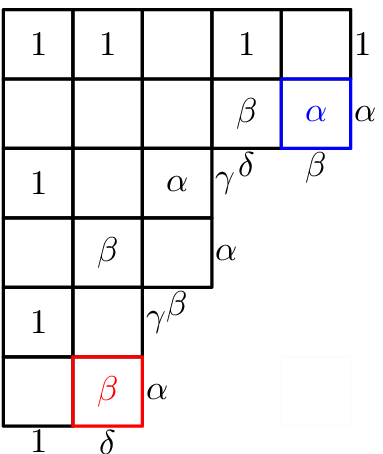}
		\end{array}
		\xrightarrow[insertion]{ribbon}
		\begin{array}{c}
			\includegraphics[scale=\scaleFigure]{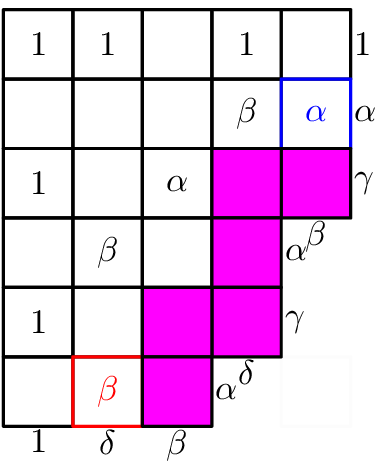}
		\end{array}
		$
	}
\caption{The ribbon insertion}
\label{fig:ribbon_insertion}
\end{figure}
\end{minipage}
\begin{minipage}{0.4\columnwidth}
\begin{figure}[H]
    \centerline{
		$
		\begin{array}{c}
			\includegraphics[scale=\scaleFigure]{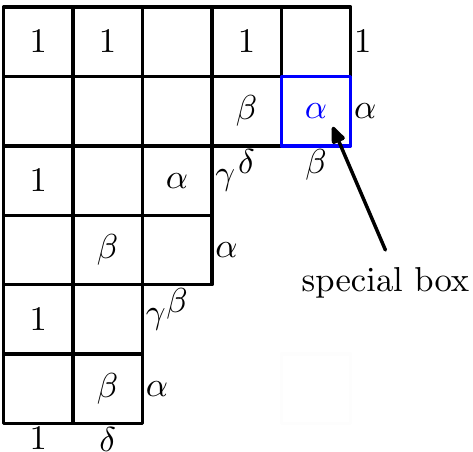}
		\end{array}
		$
	}
\caption{The special box}
\label{fig:special_box}
\end{figure}
\end{minipage}

\begin{algorithm}[ht]
\caption{Insertion procedure}
\label{algo:insertion}
\begin{algorithmic}[1]
\REQUIRE an LTLT $T$ of size $n$ together with the choice of one of its border edges $e$, 
and a compatible bi-label $(x,y)$.
\STATE \label{etape_reperer_case_speciale} Find the special box $s$ of $T$.
\STATE \label{etape_inserer_colonne} Add a row/column to $T$ at edge $e$ with new bi-label $(x,y)$.
\STATE \label{etape_ajouter_ruban} If $e$ is to the left of $s$, perform a  ribbon addition between $e$ and $s$.
\ENSURE a final  LTLT $T'$ of size $n+1$.
\end{algorithmic}
\end{algorithm}

\begin{figure}[ht]
    \centerline{
		$
		\begin{array}{c}
			\includegraphics[scale=\scaleFigure]{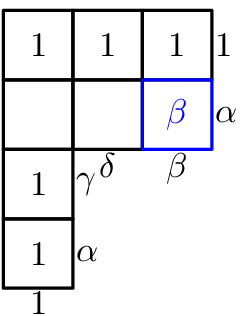}
		\end{array}
		\longrightarrow
		\begin{array}{c}
			\includegraphics[scale=\scaleFigure]{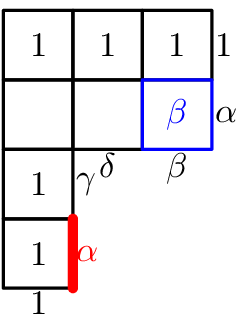}
		\end{array}
		\longrightarrow
		\begin{array}{c}
			\includegraphics[scale=\scaleFigure]{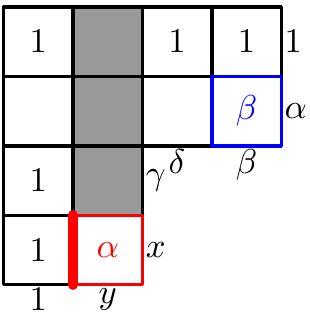}
		\end{array}
		\longrightarrow
		\begin{array}{c}
			\includegraphics[scale=\scaleFigure]{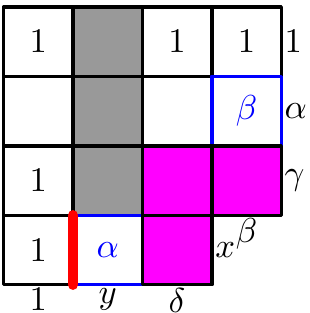}
		\end{array}
		$
	}
\caption{The insertion procedure}
\label{fig:insertion_procedure}
\end{figure}

\begin{prop}\label{prop:insertion}
The insertion algorithm \ref{algo:insertion} induces a bijection between 
\begin{itemize}
\item LTLTs of size $n$ together with the choice of a border edge and a compatible bi-label, 
\item LTLTs of size $n+1$.
\end{itemize}
\end{prop}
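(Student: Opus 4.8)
The plan is to prove that Algorithm~\ref{algo:insertion} is a bijection by exhibiting an explicit inverse --- a \emph{deletion} procedure --- and checking that the two maps are mutually inverse. Equivalently, I would establish three things: that insertion is well defined (its output is a genuine LTLT of size $n+1$), and that it is both injective and surjective; the cleanest route packages injectivity and surjectivity together by producing and verifying the inverse.

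First I would check well-definedness. Starting from an LTLT $T$ of size $n$, the row/column addition of Definition~\ref{def:col_add} increases the half-perimeter by exactly $1$, while the optional ribbon addition preserves it, so the output has half-perimeter $n+2$, i.e. size $n+1$. Then I would verify that each condition of Definition~\ref{def:LTLT} survives. The root stays labeled $1$. The only $1$'s remain those in the first row and column: here the compatibility rules for the bi-label are exactly what is needed, since a vertical edge in the first row forces $x=1$, a horizontal edge in the first column forces $y=1$, and otherwise neither new label is a $1$; moreover the relabeling that moves the old edge-label $\ell$ onto the new box moves a $1$ only within the first row or column, precisely because $T$ itself has its $1$'s confined there. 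Every row and column still contains a labeled box, thanks to the shifted labels and the new box labeled $\ell$. Finally the leg/arm conditions hold for the new labels because $x\in\{\alpha,\gamma\}$ is placed on a vertical border edge (empty leg above it, a labeled box in its arm) and $y\in\{\beta,\delta\}$ on a horizontal border edge (empty arm, a labeled box in its leg), and the ribbon, being a shift of already-labeled border edges, creates no new violation.

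Next I would construct the inverse. The device that makes the algorithm reversible is the special box of Definition~\ref{def:spec}. The technical heart of the argument is a \emph{special-box tracking lemma}: in the output $T'$, the special box $s'$ is precisely the distinguished cell created by the insertion --- the bottom cell of the freshly added column when Step~\ref{etape_ajouter_ruban} does not fire, and the corner cell of the added ribbon when it does --- and its position records which of the two happened. Granting this, the deletion procedure on an LTLT $T'$ of size $n+1$ reads: locate $s'$; from the border edges adjacent to the column (or ribbon) it determines, recover $e$, the bi-label $(x,y)$, and the label $\ell$; then remove that column, together with the ribbon back to the former location of the old special box when one is present, and shift all labels to the right of $e$ back into place. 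One reads off $(x,y)$ as the removed border labels and $\ell$ as the label reinstated on $e$.

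Finally I would check that deletion $\circ$ insertion and insertion $\circ$ deletion are both the identity; once the tracking lemma is available this is largely bookkeeping, since the forward map shifts and relabels in a way the backward map exactly undoes, and the special box computed in each direction agrees. The main obstacle is the tracking lemma itself, and in particular the case split on whether $e$ lies to the left of the special box $s$ of $T$. When no ribbon is added, the new column sits to the right of $s$, so its bottom cell becomes the new Northeast-most bottom-of-column labeled cell; when a ribbon is added, one must show that the ribbon's corner --- rather than any previously existing bottom-of-column cell --- is the new special box, and that the two scenarios leave recognizably different fingerprints in $T'$ so the inverse can distinguish them without ambiguity. Proving this cleanly, ideally by an explicit comparison of the set of bottom-of-column labeled cells before and after each operation, is where the real work lies.
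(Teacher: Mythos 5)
Your proposal is correct and rests on exactly the same key ingredient as the paper's (deliberately terse) proof: the insertion is reversible because the newly created labeled box becomes the special box of the resulting LTLT, so the deletion procedure can locate it and undo the column/ribbon addition. Your additional well-definedness checks and the honest identification of the special-box tracking lemma as the remaining work are a faithful expansion of what the paper delegates to the analogue of Theorem 2.2 in the tree-like tableaux reference.
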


\begin{proof}
We shall only give the key ingredient, which is analog to Theorem 2.2 in \cite{TLT}: the insertion algorithm is constructed in such a way
that the added labeled box becomes the special box of the new LTLT.
This implies that the process may be inversed, thus proving Proposition~ \ref{prop:insertion}.
\end{proof}

\begin{rem}
An important feature of this algorithm is that it provides a recursive comprehension
focused on the border edges of the LTLT, {\it i.e.} on the diagonal element of the staircase tableaux.
Up to now, the recursive approach was with respect to the first column, which is of course less 
significant since the state of the PASEP is encoded by the labels on the diagonal.
We give in the next section three examples of the use we can make of the tree structure and the insertion  algorithm.
\end{rem}

\section{Combinatorial applications}
\label{sec:app}

\subsection{A product formula for $Z_n(1; \alpha,\beta,\gamma,\delta ; 1) $}

\begin{prop}
\label{prop:gen_fct}
When $q = y = 1$,
\begin{equation}\label{eq:gen_fct}
Z_n(1 ; \alpha,\beta,\gamma,\delta ; 1) = \prod_{j=0}^{n-1}(\alpha+\beta+\gamma+\delta+j(\alpha+\gamma)(\beta+\delta)).
\end{equation}
\end{prop}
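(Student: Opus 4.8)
The plan is to turn the statement into a sum over LTLTs and then read off a product formula from the insertion algorithm. First I would exploit the hypothesis $q=1$: in this case every blank box of a staircase tableau $\T$ is assigned the factor $1$ (whether it ``would'' receive a $q$ or a $1$), so $\w(\T)$ specialized at $q=1$ is simply the product of its labels $\alpha,\beta,\gamma,\delta$. Under the bijection $\Lambda$ of Proposition~\ref{prop:bij} these labels are only relocated (diagonal labels slide onto border edges, the others are untouched), and the freshly created labels are all $1$'s; hence, if for an LTLT $T$ I define $W(T)$ to be the product of all its box- and edge-labels with each label $1$ read as the scalar $1$, then $\w(\T)\big|_{q=1}=W(\Lambda(\T))$. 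Since $y=1$ kills the statistic $t(\T)$, and since staircase tableaux of size $n$ correspond to LTLTs of size $n+1$, this reduces the claim to evaluating $G_{n+1}$, where $G_m:=\sum_{T}W(T)$ with $T$ ranging over LTLTs of size $m$.

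Next I would set up a recursion for $G_m$ using the insertion bijection of Proposition~\ref{prop:insertion}, which identifies LTLTs of size $m+1$ with triples consisting of an LTLT $T$ of size $m$, a border edge $e$, and a compatible bi-label $(x,y)$. The key point to check is that $W$ is multiplicative under insertion: the row/column addition moves the old label of $e$ into a box and shifts the surrounding border-edge labels, the ribbon addition again only translates existing labels, and the only genuinely new labels are the two border edges carrying $x$ and $y$. Granting this, the weight of the output LTLT is exactly $W(T)\cdot x\cdot y$, so the sum factorizes:
\[
G_{m+1}=\sum_{T}W(T)\sum_{e}\sum_{(x,y)}xy=G_m\cdot\Big(\sum_{e}\sum_{(x,y)}xy\Big),
\]
where the inner factor is independent of $T$, depending only on the distribution of border-edge types of a size-$m$ diagram.

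To compute that inner factor I would count border edges: an LTLT of size $m$ has half-perimeter $m+1$, hence $m+1$ border edges, of which exactly one is the vertical edge ending the first row and exactly one is the horizontal edge ending the first column, leaving $m-1$ generic edges. Summing $xy$ over the compatible bi-labels yields $\beta+\delta$ on the first-row edge (where $x=1$), $\alpha+\gamma$ on the first-column edge (where $y=1$), and $(\alpha+\gamma)(\beta+\delta)$ on each generic edge, so
\[
G_{m+1}=G_m\cdot\big(\alpha+\beta+\gamma+\delta+(m-1)(\alpha+\gamma)(\beta+\delta)\big).
\]
With the base case $G_1=1$ (the single size-$1$ LTLT is the lone root box labeled $1$), iterating this recursion and reindexing $j=m-1$ gives $Z_n(1;\alpha,\beta,\gamma,\delta;1)=G_{n+1}=\prod_{j=0}^{n-1}(\alpha+\beta+\gamma+\delta+j(\alpha+\gamma)(\beta+\delta))$, as desired. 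The main obstacle I anticipate is precisely the multiplicativity of $W$ under insertion, namely verifying that neither the shifts nor the ribbon addition ever create or destroy a nontrivial label; once that is secured the remainder is the edge bookkeeping above, and the small cases $m=1$ ($Z_1=\alpha+\beta+\gamma+\delta$) already confirm the indexing.
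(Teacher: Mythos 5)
Your proposal is correct and follows essentially the same route as the paper: both use the insertion bijection of Proposition~\ref{prop:insertion} to show that passing from size $m$ to size $m+1$ multiplies the generating function by $(\alpha+\gamma)+(\beta+\delta)+(m-1)(\alpha+\gamma)(\beta+\delta)$, obtained by summing $xy$ over the one first-row edge, the one first-column edge, and the $m-1$ generic border edges. Your write-up is just a more explicit version of the paper's argument, spelling out the multiplicativity of the weight under insertion, which the paper takes for granted.
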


\begin{proof}
Proposition \ref{prop:insertion} implies that given a LTLT $T$ of size $n$, we can build through the insertion algorithm
exactly $2+4(n-1)+2=4n$ different LTLTs $T'$ of size $(n+1)$.
Using  Proposition~ \ref{prop:bij}, let $\T=\Lambda^{-1}(T)$.
The contribution to $Z_n(1 ; \alpha,\beta,\gamma,\delta ; 1) $ of the $4n$ staircase tableaux $\Lambda^{-1}(T')$
is precisely 
$$\w(\T)\, ((\alpha+\gamma)+(n-1)(\alpha+\gamma)(\beta+\delta)+(\beta+\delta)),$$
whence (\ref{eq:gen_fct}).
\end{proof}

\begin{rem}
Proposition~ \ref{prop:gen_fct} corresponds to Theorem 4.1 in \cite{CSSW}. 
The insertion algorithm gives a trivial and natural explanation for this formula.
Moreover, Proposition~ \ref{prop:gen_fct} implies that the number of staircase tableaux of size $n$
is given by $4^n\, n!$.
It is clear that we may use the insertion algorithm to build a recursive bijection between staircase tableaux of size $n$
and objects enumerated by $4^nn!$ such as doubly signed permutations, 
{\em i.e.} triple $(\sigma,\varepsilon_1,\varepsilon_2)$ where 
$\sigma$ in a permutation of $n$ and $\varepsilon_1$, $\varepsilon_2$ 
two vectors of $(-1,+1)^n$.

\end{rem}

\subsection{Study of $Z_n(y;1,1,0,1;1)$}

In this part, we consider staircase tableaux without any $\gamma$ label. 
We denote by $\T(n,k)$ the number of such tableaux $\T$ of size $n$ with $k$ labels $\alpha$ or $\delta$ in the diagonal, 
{\it i.e.} such that $t(\T)=k$.

Our goal is to get a recursive formula on $\T(n,k)$ numbers.
We consider a staircase tableau $\T$ of size $n-1$ with  $t(\T)=k$, and we let $T=\Lambda(\T)$ its associated LTLT of size $n$.
Now we examine the possible insertions on $T$ to obtain an LTLT $T'$ of size $n+1$: let $e$ be the edge where the insertion occurs, 
$z$ be the label of $e$ (which does not appear on the border edges of $T'$),
and $(x,y)$ the compatible bi-label which appears on the border edges of $T'$.
We denote by $\T'$ the staircase tableau $\Lambda ^{-1}(T')$ of size $n$.

We get that $t(\T')-t(\T)$ can be equal to (we recall that staircase tableaux of size $n$ are in bijection with LTLT of size $n+1$)
\begin{itemize}
\item $0$ when $(z,x,y)$ is either $(\alpha,\alpha,\beta)$ or $(\delta,\alpha,\beta)$ or $(1,1,\beta)$, which gives $k+1$ possibilities;
\item $1$ when $(x,y,z)$ is either $(\alpha,\alpha,\delta)$ or $(\delta,\alpha,\delta)$ or $ (\beta,\alpha,\beta)$ or $(1,1,\delta)$ or $(1,\alpha,1)$, which gives $n+1$ possibilities; 
\item $2$ when $(x,y,z)=(\beta,\alpha,\delta)$ which gives $n-1-k$ possibilities (the number of border edges labeled by $\beta$).
\end{itemize}

Putting all this together, we get the following recursive formula (for all $n \ge 0$ and $k \in \mathbb{Z}$)
\begin{equation}\label{eq:stable}
\T(n,k)=(k+1)\T(n-1,k)+(n+1)\T(n-1,k-1)+(n-k+1)\T(n-1,k-2)
\end{equation}
from which we deduce a property of the polynomial $Z_n(y;1,1,0,1;1)$.

\begin{prop}\label{prop:stable}
The polynomial $Z_n(y;1,1,0,1;1)$ has all its roots in the segment $]-1,0[$.
As a  consequence, it is stable and log-concave.
\end{prop}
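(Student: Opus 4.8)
The plan is to pass from the coefficient recurrence (\ref{eq:stable}) to a differential recurrence on the generating polynomials and then run an induction on $n$ governed by an interlacing/sign-change argument. Write $Z_n(y):=Z_n(y;1,1,0,1;1)=\sum_k \T(n,k)\,y^k$. Multiplying (\ref{eq:stable}) by $y^k$, summing over $k$, and using $\sum_k k\,\T(n-1,k)y^k=y\,Z_{n-1}'(y)$ to absorb the factors $k$ and $n-k+1$, I would obtain
\[
Z_n(y)=y(1-y^2)\,Z_{n-1}'(y)+\bigl(1+(n+1)y+(n-1)y^2\bigr)Z_{n-1}(y).
\]
This is the engine of the whole proof.

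First I would record three structural facts. Since the $\T(n,k)$ are nonnegative, $Z_n$ has nonnegative coefficients. Next, the two summands each contribute a term of degree $n+1$, namely $-(n-1)y^{n+1}$ and $+(n-1)y^{n+1}$ times the leading coefficient of $Z_{n-1}$; these cancel, so that $\deg Z_n=n$ with a strictly positive leading coefficient. Finally I would evaluate the recurrence at the two endpoints: at $y=0$ the derivative term vanishes and the bracket equals $1$, giving $Z_n(0)=Z_{n-1}(0)=\cdots=1$; at $y=-1$ the factor $y(1-y^2)$ vanishes while the bracket equals $-1$, giving $Z_n(-1)=-Z_{n-1}(-1)$, hence $Z_n(-1)=(-1)^n$ by induction.

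The heart of the argument is the inductive step. Assume $Z_{n-1}$ has $n-1$ simple roots $-1<r_{n-1}<\cdots<r_1<0$. Evaluating the recurrence at a root $r_i$ annihilates the bracket term and leaves $Z_n(r_i)=r_i(1-r_i^2)\,Z_{n-1}'(r_i)$. Because $r_i\in\,]-1,0[$ one has $r_i<0$ and $|r_i|<1$, so $r_i(1-r_i^2)<0$; and because $Z_{n-1}$ has positive leading coefficient and only simple real roots, its derivative alternates in sign with $\mathrm{sign}\,Z_{n-1}'(r_i)=(-1)^{i-1}$. Hence $\mathrm{sign}\,Z_n(r_i)=(-1)^i$. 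Combining this with $\mathrm{sign}\,Z_n(0)=(-1)^0$ and $\mathrm{sign}\,Z_n(-1)=(-1)^n$, the signs of $Z_n$ at the $n+1$ ordered points $-1<r_{n-1}<\cdots<r_1<0$ form a perfectly alternating sequence. Thus $Z_n$ changes sign on each of the $n$ open intervals they delimit, yielding $n$ distinct roots inside $]-1,0[$; since $\deg Z_n=n$ these are all of them and all simple. The base case is $Z_1=1+2y$, whose single root $-1/2$ lies in $]-1,0[$.

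I expect the main obstacle to be the careful bookkeeping of signs rather than any deep difficulty. The two roots that are \emph{not} trapped between consecutive roots of $Z_{n-1}$ are exactly the ones captured by the boundary intervals $]-1,r_{n-1}[$ and $]r_1,0[$, and it is the clean identities $Z_n(-1)=-Z_{n-1}(-1)$ and $Z_n(0)=Z_{n-1}(0)$ that force each of these intervals to contribute precisely one root, closing the count at $n$. Once real-rootedness with roots in $]-1,0[$ is established, the stated consequences are immediate: a univariate polynomial with only real roots is stable, and a polynomial with nonnegative coefficients and only real roots has a log-concave coefficient sequence by Newton's inequalities.
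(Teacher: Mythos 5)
Your proof is correct, and it rests on exactly the same engine as the paper's: the differential recurrence $P_n=(1+(n+1)y+(n-1)y^2)P_{n-1}+(y-y^3)P_{n-1}'$ obtained by summing (\ref{eq:stable}) against $y^k$, followed by induction on $n$. Where you diverge is in how the $n$ roots are extracted from this recurrence. The paper introduces the integrating factor $e^{\lambda_n(y)}$ with $\lambda_n(y)=-(n+\tfrac32)\ln(1-y)-\tfrac12\ln(1+y)$, writes $P_n=f_n'\,e^{-\lambda_n}$ where $f_n=(y^3-y)e^{\lambda_n}P_{n-1}$ vanishes at $-1$, at $0$, and at the $n-1$ roots of $P_{n-1}$, and applies Rolle's theorem to these $n+1$ zeros. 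You instead evaluate $Z_n$ directly at the endpoints and at the roots of $Z_{n-1}$ and count sign changes; your identities $Z_n(0)=Z_{n-1}(0)$ and $Z_n(-1)=-Z_{n-1}(-1)$ play precisely the role that the vanishing of $y^3-y$ at $0$ and $-1$ plays in the paper. The two mechanisms are equivalent in substance, but yours is more elementary and self-contained: it avoids having to produce $\lambda_n$ (and the attendant check that the singular factor $(1+y)^{-1/2}$ is cancelled by $y^3-y$ so that $f_n$ genuinely vanishes at $-1$), at the modest cost of the sign bookkeeping $\mathrm{sign}\,Z_{n-1}'(r_i)=(-1)^{i-1}$, which you carry out correctly. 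You are also more explicit than the paper on two points it leaves implicit: that $\deg Z_n=n$ with positive leading coefficient (via the cancellation of the degree-$(n+1)$ terms), which is needed to conclude that the $n$ roots found are all of them, and why real-rootedness together with nonnegative coefficients yields log-concavity (Newton's inequalities). All steps check out, including the base case $Z_1=1+2y$.
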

\begin{proof}
We let $P_n(y)=\sum\limits_{k \ge 0}\T(n,k)y^k=Z_n(y;1,1,0,1;1)$
and use (\ref{eq:stable}) to write
\begin{eqnarray*}
P_n(y)&=&\sum_k(k+1)\T(n-1,k)y^k+\sum_k(n+1)\T(n-1,k-1)y^k\\
            &+&\sum_k(n-k+1)\T(n-1,k-2)y^k\\
           &=&(1+(n+1)y+(n-1)y^2)P_{n-1}(y)+(y-y^3)P_{n-1}'(y)\\
	  &=&\big((y-y^3)e^{\lambda_n(y)}P_{n-1}(y)\big)' \times e^{-\lambda_n(y)}
\end{eqnarray*}
with $\lambda_n(y)=-(n+\frac{3}{2})\ln (1-y) -\frac12\ln(1+y)$. Then we check that the equality
$$P_n(y)=f_n'(y)e^{-\lambda_n(y)}$$
with $f_n(y)=(y^3-y)e^{\lambda_n(y)}P_{n-1}(y)$ implies by induction on $n$ that $P_n$ has at least $n$ distinct zeros in $]-1,0[$.
Since $n$ is the degree of $P_n$, the conclusion follows.
\end{proof}

\begin{rem}
Equation (\ref{eq:stable}), as well as Proposition~ \ref{prop:stable} are new.
Our insertion algorithm shows its strength when it comes to study recursively the diagonal 
in staircase tableaux, which is meaningful in the PASEP model: it is by far more natural
than the already studied \cite{CW2} recursion with respect to the first column of the tableau.
\end{rem}

\subsection{A bijective proof for $Z_n(1;1,1,1,0;0)$.}

We show how our tools lead to a bijection between staircase tableaux without any label $\delta$ or weight $q$ 
and a certain class of paths enumerated by the sequence {\tt A026671} of  \cite{oeis}.
This is an answer to Problem 5.8 of \cite{CSSW}.

A  {\em lazy path} of size $n$ is a path on the lattice $\Z\times\Z$
\begin{itemize}
\item starting at $(0,0)$, ending at $(2n,0)$,
\item whose steps are  $(1,1)$, $(1,-1)$ or $(2,0)$,
\item such that its steps $(2,0)$ only appear on the axis $(0,x)$.
\end{itemize}
Figure \ref{fig:lazy_path} is an example of a lazy path.

\begin{figure}[ht]
    \centerline{
		$
		\begin{array}{c}
			\includegraphics[scale=\scaleFigure]{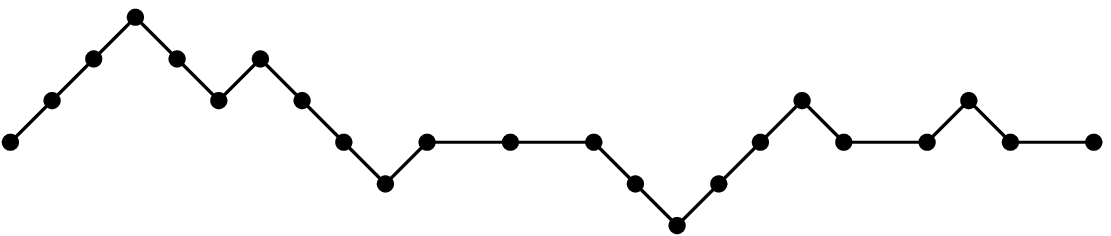}
		\end{array}
		$
	}
\caption{A lazy path of size $13$}
\label{fig:lazy_path}
\end{figure}

Let us consider LTLTs which are in bijection with staircase tableaux without $\delta$ or $q$.
The first observation is that these tableaux correspond to trees without any crossing, since a crossing sees an $\alpha$ or a  $\gamma$ 
to its right and a $\beta$ below, which gives a weight $q$
({\it cf.} Figure~ \ref{fig:no_cross}).
\begin{figure}[ht]
    \centerline{
		$
		\begin{array}{c}
			\includegraphics[scale=\scaleFigure]{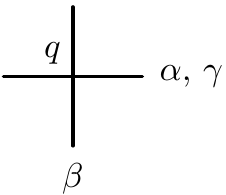}
		\end{array}
		$
	}
\caption{A crossing in an LTLT gives a weight $q$}
\label{fig:no_cross}
\end{figure}
Thus we have to deal with binary trees whose left sons only have one choice of label ($\beta$)
and whose right sons may have one ($\alpha$) or two choices ($\alpha$ or $\gamma$) of labels.
The restriction of no weight $q$ is equivalent to forbidding any point in the tree 
which sees an $\alpha$ or a  $\gamma$  
to its right and a $\gamma$ below it.
Since we deal with binary trees (without any crossing), we get that the only nodes $v$ where we may 
put a label $\gamma$ are such that the path in the tree from the root to $v$ contains exactly one left son,
followed by a right son.
These nodes are illustrated on Figure~ \ref{fig:pos_gamma}.
\begin{figure}[ht]
    \centerline{
		$
		\begin{array}{c}
			\includegraphics[scale=\scaleFigure]{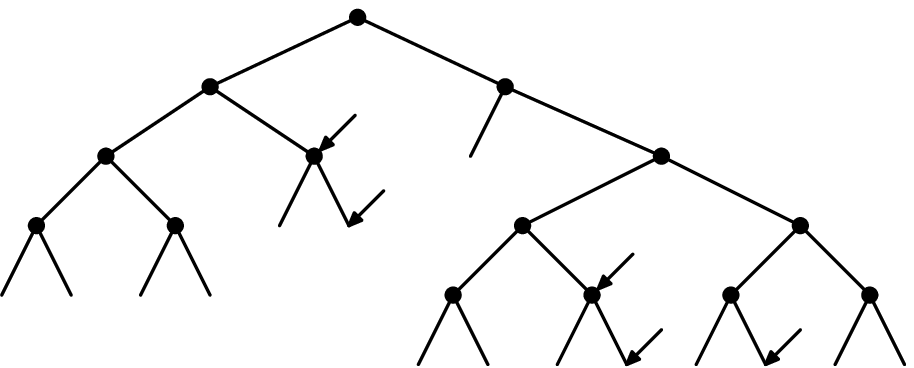}
		\end{array}
		$
	}
\caption{Arrows give the possible positions for a $\gamma$ label}
\label{fig:pos_gamma}
\end{figure}
We may shift these labels to their father in the tree, and define the {\em left depth} of a node $v$ in a binary tree
as the number of left sons in the path from the root to $v$, and using the bijection $\Lambda$, we get the following statement.

\begin{lem}\label{lem:bij}
The set of staircase tableaux of size $n$ without any $\delta$ label or $q$ weight
is in bijection with the set $\B_n$ of binary trees of size $n$ whose nodes of left depth equal to $1$
are labeled by $\alpha$ or $\gamma$.
\end{lem}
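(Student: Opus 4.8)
The plan is to read off the bijection directly from the tree structure, using the bijection $\Lambda$ of Proposition~\ref{prop:bij} to pass between staircase tableaux and LTLTs, and then to argue that, once the underlying tree shape is fixed, every label is forced apart from a single binary choice ($\alpha$ or $\gamma$) at each node of left depth $1$.

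First I would transport the problem to LTLTs. A staircase tableau $\T$ of size $n$ with neither a $\delta$ label nor a weight $q$ corresponds under $\Lambda$ to an LTLT $T=\Lambda(\T)$; since any crossing produces a weight $q$ (Figure~\ref{fig:no_cross}), the absence of $q$ forces $T$ to be crossing-free, so that its underlying structure is a genuine binary tree whose internal nodes are the labeled boxes. This fixes the tree shape and, after the standard size bookkeeping, associates to $\T$ a binary tree of size $n$.

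Next I would argue that the tree shape together with a marking of the left-depth-$1$ nodes determines $T$ completely. By Definition~\ref{def:LTLT} the root is labeled $1$ and the only other $1$'s lie in the first row and column; a left son can only be a $\beta$ (the alternative $\delta$ being excluded), while a right son can only be an $\alpha$ or a $\gamma$. Thus all labels are forced except the choice between $\alpha$ and $\gamma$ at the right sons. The no-$q$ condition restricts this further: a box seeing an $\alpha$ or $\gamma$ to its right and a $\gamma$ below it would again create a weight $q$, so in the crossing-free tree the positions where a $\gamma$ may appear are exactly those nodes whose root path consists of one left son followed by a right son. Shifting each such $\gamma$ to the father of its node re-expresses these admissible positions as the nodes of left depth $1$, on which the label $\alpha$ or $\gamma$ now records the binary choice.

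Finally I would assemble the two directions. The forward map sends $\T$ to its underlying binary tree carrying, at each left-depth-$1$ node, the label $\alpha$ or $\gamma$ obtained after the shift; it lands in $\B_n$. Conversely, any element of $\B_n$ determines a unique crossing-free LTLT, since all labels outside the left-depth-$1$ nodes are forced as above while the marking prescribes the rest; applying $\Lambda^{-1}$ then returns a staircase tableau with no $\delta$ and no $q$. The two maps are mutually inverse by construction. I expect the main obstacle to be the precise justification that the admissible $\gamma$-positions are exactly the claimed ones and that the shift to the father sets up a bijection between these positions and the left-depth-$1$ nodes; granting this, the reversibility of each step, and hence the stated bijection, follows directly.
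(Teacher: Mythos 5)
Your argument is correct and follows essentially the same route as the paper, which presents this reasoning as the discussion preceding the lemma rather than as a formal proof: pass to LTLTs via $\Lambda$, observe that the absence of $q$ forces a crossing-free (hence genuinely binary) tree, note that left sons are forced to be $\beta$ and right sons to be $\alpha$ or $\gamma$, locate the admissible $\gamma$-positions via the no-$q$ condition, and shift those labels to their fathers to land on the nodes of left depth $1$. The one point you flag as a potential obstacle (that the admissible $\gamma$-positions are exactly the claimed ones) is treated in the paper at the same level of detail, by direct inspection of the weight rules and a figure.
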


We shall now code the trees in $\B_n$ by lattice paths.
To do this, we use a deformation of the classical bijection \cite{stanley} between binary trees 
and Dyck paths: we go around the tree, starting at the root and omitting the last external node,
and we add to the path a step  $(1,1)$ when visiting (for the first time) an internal node,
or a step $(1,-1)$ when visiting an external node.
Let us denote by $\pi(T)$ the  (Dyck) path associated to the binary tree $T$ under this procedure.
It is well-known that $\pi$ is a bijection between binary trees with $n$  internal nodes
and Dyck paths of length $2n$ (of size $n$).

If we use the same coding, but omitting the root and the last $2$ external nodes,
we get a bijection between binary trees with $n$ internal nodes and {\em almost-Dyck} paths
(whose ordinate is always $\ge -1$) of size $n-1$. 
In the sequel, we shall call {\em factor} of a path a minimal sub-path starting from the axis and ending on the axis.
We may replace the negative factors by steps  $(0,2)$ to get a  bijection $\pi'$ between  binary trees of size
$n$ and {\em positive} lazy paths of size $n-1$ (these objects appear under the name (${\rm x}^4$) in \cite{catadd}).
Figure \ref{fig:bij_pi} illustrates bijections $\pi$ and $\pi'$. 
\begin{figure}[ht]
    \centerline{
		$
		\begin{array}{c}
			\includegraphics[scale=\scaleFigure]{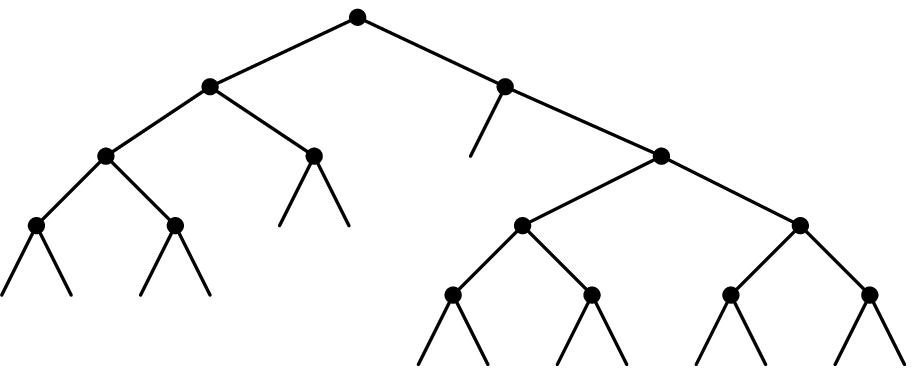}
		\end{array}
		$
	}
    \centerline{
		$
		\begin{array}{c}
			\includegraphics[scale=\scaleFigure]{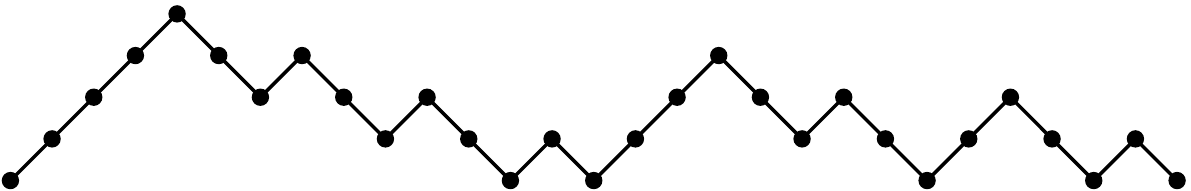}
		\end{array}
		$
	}
    \centerline{
		$
		\begin{array}{c}
			\includegraphics[scale=\scaleFigure]{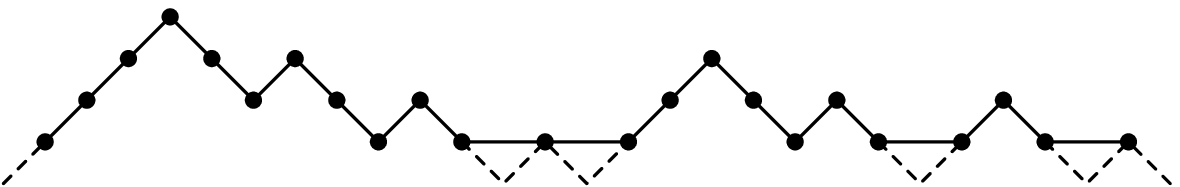}
		\end{array}
		$
	}
\caption{The bijections $\pi$ and $\pi'$}
\label{fig:bij_pi}
\end{figure}

We observe that the nodes with left depth equal to $1$ in a binary tree $B$ correspond to steps 
$(1,1)$ which start on the axis in  $\pi'(B)$, thus to strictly positive factors in $\pi'(B)$. 
These nodes may be labeled with $\alpha$ or $\gamma$. To translate this bijectively,
we only have to leave unchanged a factor associated to a label $\alpha$, 
and to apply a mirror reflexion to a factor associated to a label $\gamma$.
Figure \ref{fig:bij} illustrates this correspondence.
Thanks to Proposition~ \ref{prop:bij} and Lemma~ \ref{lem:bij},
we get a bijection denoted by $\Phi$, 
between staircase tableaux and lazy paths (see Figure~ \ref{fig:from_st_to_lazy_path}).

\begin{figure}[ht]
    \centerline{
		$
		\begin{array}{c}
			\includegraphics[scale=\scaleFigure]{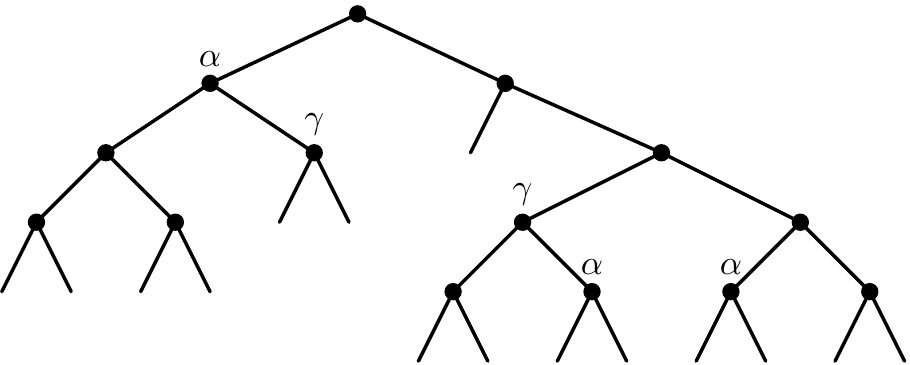}
		\end{array}
		$
	}
    \centerline{
		$
		\begin{array}{c}
			\includegraphics[scale=\scaleFigure]{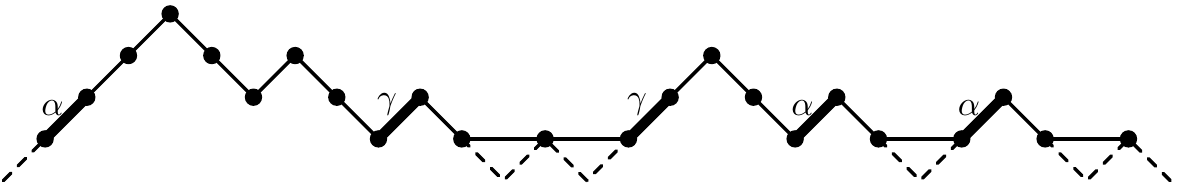}
		\end{array}
		$
	}
    \centerline{
		$
		\begin{array}{c}
			\includegraphics[scale=\scaleFigure]{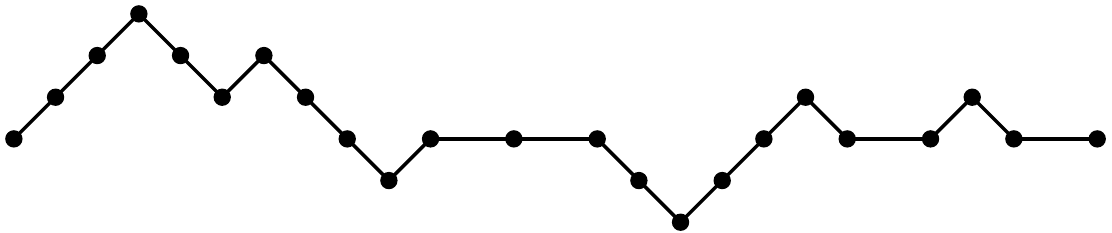}
		\end{array}
		$
	}
\caption{From labeled trees to lazy paths}
\label{fig:bij}
\end{figure}

\begin{figure}[ht]
    \centerline{
		$
		\begin{array}{c}
			\includegraphics[scale=\scaleFigure]{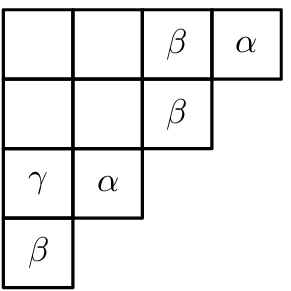}
		\end{array}
		\longleftrightarrow
		\begin{array}{c}
			\includegraphics[scale=\scaleFigure]{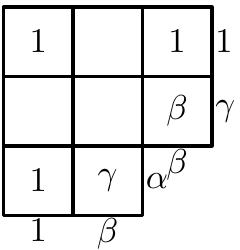}
		\end{array}
		\longleftrightarrow
		\begin{array}{c}
			\includegraphics[scale=\scaleFigure]{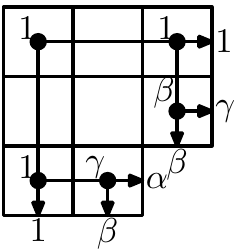}
		\end{array}
		\longleftrightarrow
		$
	}
    \centerline{
		$
		\begin{array}{c}
			\includegraphics[scale=\scaleFigure]{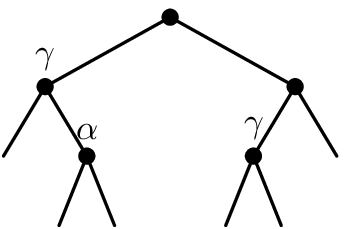}
		\end{array}
		\longleftrightarrow
		\begin{array}{c}
			\includegraphics[scale=\scaleFigure]{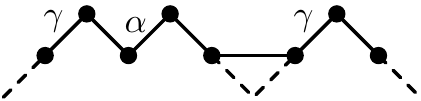}
		\end{array}
		\longleftrightarrow
		\begin{array}{c}
			\includegraphics[scale=\scaleFigure]{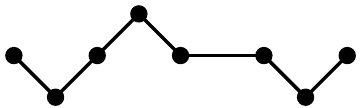}
		\end{array}
		$
	}
\caption{Bijection $\Phi$}
\label{fig:from_st_to_lazy_path}
\end{figure}

We recall the following definition from \cite{CW2}.
A row \emph{indexed} by $\beta$ or $\delta$ 
in a staircase tableau $\T$ is a row such that its left-most label is  $\beta$ or $\delta$.
In the same way, a column indexed by an $\alpha$ or a $\gamma$ 
is a column such that its top-most label is  $\alpha$ or $\gamma$.
For example, the staircase tableau on the left of Figure~\ref{fig:ST} has 
2 columns indexed by $\alpha$ and 1 row indexed by $\delta$.

\begin{prop}
The application $\Phi$ defines a bijection from the set of staircase tableaux $\T$ without label $\delta$ and weight $q$,
of size $n$, to the set of lazy paths of size $n$. Moreover, if we denote:
$D(\Phi(\T))$ the number of $(1,-1)$ steps, 
$M(\Phi(\T))$ the number of $(1,1)$ steps,
$A(\Phi(\T))$ the length of the initial maximal sequence of $(1,1)$ steps,
$H(\Phi(\T))$ the number of $(2,0)$ steps,
$F(\Phi(\T))$ the number of factors,
and
$N(\Phi(\T))$ the number of negative factors
in $\Phi(\T)$, then:
\begin{itemize}
\item the number of $\gamma$ labels in $\T$ is given by $N(\Phi(\T))$;
\item the number of $\alpha$ labels in $\T$ is given by $M(\Phi(\T))-N(\Phi(\T))$;
\item the number of $\beta$ labels in $\T$ is given by $D(\Phi(\T))+H(\Phi(\T))-A(\Phi(\T))$;
\item the number of columns indexed by $\beta$ in $\T$ is given by  $H(\Phi(\T))$;
\item the number of rows indexed by $\alpha$ or $\gamma$ in $\T$ is given by  $A(\Phi(\T))$.
\end{itemize}
\end{prop}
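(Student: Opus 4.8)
The plan is to read off all six identities from the chain of bijections that already defines $\Phi$, rather than to analyse $\Phi$ directly. First I would record that $\Phi$ is a genuine bijection onto lazy paths: it is the composition of $\Lambda$ of Proposition~\ref{prop:bij}, the tree-coding of Lemma~\ref{lem:bij}, the contour codings $\pi$ and $\pi'$, and the factor-by-factor mirror reflection applied to the $\gamma$-labels. Each ingredient is invertible, and the reflection step realizes an arbitrary choice of sign on each strictly positive factor of $\pi'(B)$, so the image is exactly the set of all lazy paths and not merely the positive ones. The one piece of bookkeeping to settle here is the size convention: I would pin down once and for all how ``size'' propagates through $\Lambda$ (a staircase of size $n-1$ against an LTLT of size $n$), through $\B_n$, and through $\pi'$ (which drops the root and the last two external nodes), so that the claimed ``size $n$ to size $n$'' is literally correct.

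The heart of the proof is a dictionary between the features of the binary tree $B\in\B_n$ and the elementary steps and factors of $\Phi(\T)$. The one immediate entry is $\gamma\leftrightarrow N$: by construction a $\gamma$-label is exactly a left-depth-$1$ node whose strictly positive factor has been reflected, and mirror reflection turns positive factors into negative ones and does nothing else, so the number of $\gamma$'s equals the number of negative factors $N$. For the remaining entries I would exploit the preorder structure of the contour walk, namely that every internal node contributes one $(1,1)$ step, that a node's step immediately follows its parent's $(1,1)$ step precisely when it is a left child, and that the left-depth-$1$ nodes are exactly the strictly positive factors of $\pi'(B)$, as already observed just before Lemma~\ref{lem:bij}. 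I would also use that mirror reflection preserves $M$, $D$, $H$ and $F$ and only toggles the sign of a factor, so that $M,D,H,F$ of $\Phi(\T)$ may be computed on the positive path $\pi'(B)$, whereas $A$ and $N$ are the genuinely sign-sensitive statistics.

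With this dictionary in hand the four remaining identities reduce to counting statements on $B$ matched against step-counts on the path, closed off by the elementary relations $M=D$ and $M+H=(\text{path size})$. The count of $\alpha$'s is the number of un-reflected up-steps, which the dictionary identifies with $M-N$; the count of $\beta$'s is the number of left children, which after accounting for the flats produced by the contour dips and for the suppressed initial spine becomes $D+H-A$. The two ``indexed'' statistics I would read off the boundary of the contour walk: the flat steps $(2,0)$, which arise from the negative dips created by omitting the root and the last two external nodes, match the columns whose top label is $\beta$ and hence give $H$; and the initial maximal ascent $A$ corresponds to the left spine emanating from the root, which under $\Lambda$ is exactly the set of rows indexed by $\alpha$ or $\gamma$. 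Each of these is verified by translating the relevant tableau feature back through $\Lambda$ and Lemma~\ref{lem:bij} to the tree, and then forward through $\pi'$.

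The main obstacle is precisely this dictionary, and within it the disentangling of the two distinct sources of below-axis and flat behaviour: the flats counted by $H$ are forced by the coding $\pi'$, being the dips to $-1$ created by dropping the root and the last two external nodes, whereas the negatives counted by $N$ are produced only afterwards, by the $\gamma$-reflection. Keeping these two phenomena separate while simultaneously tracking how the omissions affect the initial ascent $A$ is where the argument is delicate; the cleanest route is to establish the dictionary on the contour walk in full generality, e.g.\ by induction on the factor decomposition of the path, and to treat the ``indexed row/column'' translations through $\Lambda$ as separate, purely local checks. The arithmetic identities for $\alpha$, $\beta$ and $\gamma$ then follow mechanically.
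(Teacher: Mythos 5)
Your proposal is correct and follows essentially the same route as the paper: establish bijectivity by composing $\Lambda$, Lemma~\ref{lem:bij}, $\pi'$ and the factor reflection, then verify each statistic through a dictionary matching tree features (left/right branches, left-depth-$1$ nodes, external nodes) to path statistics, with the reflection accounting for $N$ versus $M$. The paper's own verification is just as terse a list of these correspondences (e.g.\ it obtains the $\alpha$ count as ``$\alpha$ or $\gamma$ labels $=M$'' minus ``$\gamma$ labels $=N$'', which is the cleaner phrasing of your ``un-reflected up-steps''), so the two arguments coincide in substance.
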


\begin{proof}
We still have to check the assertions about the different statistics.
We recall that, as  defined in \cite{CW2}, 
rows indexed by $\beta$ or $\delta$ (resp. columns indexed by $\alpha$ or $\gamma$) in a staircase tabelau $\T$
correspond to non-root $1$ labels in the first column (resp. first row) of $\Lambda(\T)$. 
We have:
\begin{itemize}
\item the number of $\gamma$ labels is by definition the number of negative factors in $N(\Phi(\T))$;
\item the number of $\alpha$ or $\gamma$ labels is $M(\Phi(\T))$;
\item the number of $\beta$ labels is the number of external nodes minus the number of nodes in the left branch of the tree, thus $D(\Phi(\T))+H(\Phi(\T))-A(\Phi(\T))$;
\item columns indexed by $\beta$ correspond to  nodes in the right branch of the tree, their number is $H(\Phi(\T))$;
\item rows indexed by $\alpha$ or $\gamma$ correspond to  nodes in the left branch of the tree, their number is $A(\Phi(\T))$.
\end{itemize}
\end{proof}

\begin{rem}
\rm We may observe that among this class of staircase tableaux $\T$, those who have only 
 $\beta$ or $\gamma$ labels on the diagonal, {\it i.e.} such that $t(\T)=0$ are in bijection with binary trees
 whose internal nodes are of left depth at most $1$, and such
that we forbid the $\alpha$ label on external nodes.
The bijection $\Phi$ sends these tableaux onto lazy paths
of height and depth bounded by $1$ and whose factors preceding either a $(2,0)$ step or the end of the path are negative 
({\it cf.} Figure~ \ref{fig:frob_path}).
Let us denote by $S(n)$ the number of such path of size $n$. 
By decomposing the path with respect to its first two factors, we may write 
$$S(n)=3S(n-1)-S(n-2)$$ 
which corresponds ({\it cf.} the entry {\tt A001519} in \cite{oeis}) to the recurrence of odd Fibonacci numbers
$F_{2n+1}$, as claimed in Corollary 3.10 of \cite{CSSW}.
\end{rem}
\begin{figure}[ht]
    \centerline{
		$
		\begin{array}{c}
			\includegraphics[scale=\scaleFigure]{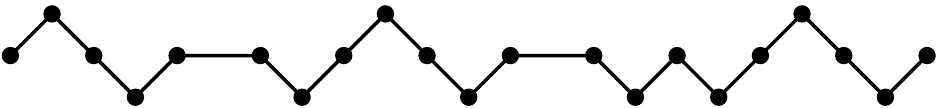}
		\end{array}
		$
	}
\caption{An odd Frobenius path}
\label{fig:frob_path}
\end{figure}

\begin{rem}
\rm Another interesting special case concerns staircase tableaux of size $n$ without any $\delta$ or $\gamma$ labels,
and without weight $q$. It is obvious that the bijection $\Phi$ maps these tableaux onto binary trees,
enumerated by Catalan numbers  $C_n=\frac 1{n+1}{2n \choose n}$. 
Moreover, if we keep track of the number  $k$ of external nodes labeled with $\beta$, 
we get a bijection with Dyck paths $\pi$ of size  $n$ with exactly $k$ peaks,
enumerated by Narayana numbers $N(n,k)$. 
\end{rem}

\bigskip
\noindent
{\bf\large Forthcoming objective.}
We are convinced that LTLTs, because of their insertion algorithm, are objects 
that are both natural and easy to use, as shown in this paper on some special cases. 
Since a nice feature of our insertion procedure is to work on the boundary edges,
which encode the states in the PASEP,
an objective is to use these objects to describe combinatorially the 
general case of the PASEP model \cite{CW2}.
To do that, we have to find an alternate description of the weight on LTLTs, hopefully simpler than
the one defined on staircase tableaux  in Definition~\ref{weight}.

\bigskip
\noindent
{\bf\large Acknowledgements}
\label{sec:ack}
This research was driven by computer exploration using the open-source mathematical software \texttt{Sage} \cite{sage} and its algebraic combinatorics features developed by the \texttt{Sage-Combinat} community~\cite{Sage-Combinat}.


\end{document}